\newtheorem{preproof}{{\bf \indent Proof.}}
\newenvironment{proof}[1]{\begin{preproof}{\rm
               #1}\hfill{$\Box$}}{\end{preproof}}
\newtheorem{thm}{{\bf\indent Theorem}}[section]
\newtheorem{prop}{\bf\indent Proposition}[section]
\newtheorem{remark}{\bf\indent Remark}[section]
\newtheorem{lem}{\bf\indent Lemma}[section]
\title{\bf \large Coloring of  cozero-divisor graphs\\ of commutative von Neumann regular rings\thanks
{{\it Key Words}: Cozero-divisor graph; Von Neumann regular ring; Clique number; Chromatic number; Perfect graph.\newline
{\indent{~~2010 {\it Mathematics Subject Classification}: 05C69; 13A15; 13E05; 16E50.}}}}
\author{{\normalsize  {\sc R. Nikandish${}^{\mathsf{a}}$, {\sc M.J. Nikmehr${}^{\mathsf{b}}$} and {\sc M. Bakhtyiari${}^{\mathsf{b}}$}  }
}\vspace{3mm}\\
{\footnotesize{${}^{\mathsf{a}}$\it Department of Basic Sciences, Jundi-Shapur University of Technology,}}\\
{\footnotesize{\rm P.O. BOX \rm{64615-334},
Dezful, Iran}}\\
{\footnotesize{ $\mathsf{r.nikandish@jsu.ac.ir}$}}\\
{\footnotesize{${}^{\mathsf{b}}$\it Faculty of Mathematics, K.N. Toosi
University of Technology, }}\\
{\footnotesize{\rm P.O. BOX \rm{16315-1618}, Tehran, Iran}}\\
{\footnotesize{ $\mathsf{nikmehr@kntu.ac.ir}$}}\quad\quad
{\footnotesize{$\mathsf{m.bakhtyiari55@gmail.com}$}}\\
{\footnotesize{$\mathsf{}$ }}}
\date{}
\begin{document}

\maketitle

\begin{abstract}
{\small Let $R$ be a commutative ring with non-zero identity. The cozero-divisor graph of $R$, denoted by $\Gamma^{\prime}(R)$, is a graph with vertices in $W^*(R)$, which is the set of all non-zero and non-unit elements of $R$, and two distinct vertices $a$ and $b$ in $W^*(R)$
are adjacent if and only if $a\not\in Rb$ and $b\not\in Ra$. In this paper, we show that the cozero-divisor graph of a von Neumann regular ring with finite clique number is not only weakly perfect but also perfect. Also, an explicit formula for the clique number is given.
   }
\end{abstract}
\begin{center}\section{Introduction}\end{center}
\par
The cozero-divisor graphs associated with commutative rings, as the dual notion of zero-divisor graphs, was first introduced by Afkhami and Khashyarmanesh in \cite{Afkhami1}, where they investigated some fundamental properties on the structure of this graph and the relation between cozero-divisor and zero-divisor graphs. Study of the complement of cozero-divisor graphs and characterization of commutative rings with forest, star, double-star or unicyclic cozero-divisor graphs were made bay the same authors in \cite{Afkhami2}. Planar, outerplanar and ring graph cozero-divisor graphs may be found in \cite{Afkhami3}. Akbari et al. gave further results on rings with forest cozero-divisor graphs and diameter of cozero-divisor graphs associated with $R[x]$ and $R[[x]]$ (see \cite{Akbari1}). The cozero-divisor graph has also been studied in several other papers (e.g., \cite{Afkhami4, Akbari2, Ansari, Kala}). In this paper, we deal with the coloring cozero-divisor graphs problem. Interested readers may find some methods in coloring of graphs associated with rings in \cite{aalipour, maimani}. First we recall some terminology and notation.
\par
Throughout this paper, all rings are assumed to be commutative with identity. We denote by $\mathrm{Max}(R)$, ${U}(R)$, $W(R)$ and $\mathrm{Nil}(R)$, the set of all maximal ideals of $R$, the set of all invertible elements of $R$, the set of all non-unit elements of $R$ and the set of all nilpotent elements of $R$, respectively. For a subset $T$ of a ring $R$ we let $T^*=T\setminus\{0\}$. The ring $R$ is said to be \textit{reduced} if it has no non-zero
nilpotent element. The ring $R$ is called \textit{von\, Neumann regular} if for every $r\in R$, there exists an $s\in R$ such that $r=r^2s$. The \textit{krull dimension of} $R$, denoted by ${\rm dim}(R)$,
is the supremum of the lengths of all chains of prime ideals.  For any undefined notation or terminology in ring theory, we refer the reader to \cite{ati}.
\par
 Let $G=(V,E)$ be a graph, where $V=V(G)$ is the set of vertices and $E=E(G)$ is the set of edges.  By $\overline{G}$, we mean the complement graph of $G$. We write $u-v$, to denote an edge with ends $u,v$.  If $ U \subseteq V(G)$, then  by $N(U)$ we mean the set of all neighbors of  $U$ in $G$. A graph $H=(V_0,E_0)$ is called a \textit{subgraph of} $G$ if $V_0\subseteq V$ and $E_0 \subseteq E$. Moreover, $H$ is called an \textit{induced subgraph by} $V_0$,  denoted by $G[V_0]$, if $V_0\subseteq V$ and $E_0=\{\{u,v\}\in E\, |\,u,v\in V_0\}$. Also $G$ is called a \textit{null graph} if it has no edge.
 A \textit{clique} of $ G $ is a maximal complete subgraph of $ G $ and the number of vertices in the largest clique of $ G $, denoted by $\omega(G)$, is called the \textit{clique number} of $ G $. For a graph $ G $, let $ \chi(G) $ denote the \textit{vertex chromatic number} of $ G $, i.e., the minimal number of colors which can be assigned to the vertices of $ G $ in such a way that every two adjacent vertices have different colors. A graph $G$ is said to be \textit{weakly perfect} if $\omega(G)=\chi(G)$. A \textit{perfect graph} $G$ is a graph in which  every induced subgraph is weakly perfect.
For any undefined notation or terminology in graph theory, we refer the reader to \cite {west}.
\par
 Let $R$ be a commutative ring with nonzero identity. \textit{The cozero-divisor graph of} $R$, denoted by $\Gamma^{\prime}(R)$, is a graph with the vertex set $W^*(R)$ and two distinct vertices $a$ and $b$ in $W^*(R)$
are adjacent if and only if $a\not\in Rb$ and $b\not\in Ra$. In this paper, it is shown that the cozero-divisor graph of a von Neumann regular ring with finite clique number is weakly perfect. Moreover,  an explicit formula for the clique number is given. Finally, we strengthen this result; Indeed it is proved that this graph is perfect.


{\section{Clique and Chromatic Number of $\Gamma^{\prime}(R)$}\vspace{-2mm}

Let $R$ be a von Neumann regular ring and $\omega(\Gamma^{\prime}(R))<\infty$. The main of this section is to show that $\omega(\Gamma^{\prime}(R))=\chi(\Gamma^{\prime}(R))={n\choose [n/2]}$, where $n=|\mathrm{Min}(R)|$. First, we need a series of lemma.

\begin{lem}\label{color}
 Let $R$ be a ring. If  $\omega(\Gamma^{\prime}(R))<\infty$, then $R$ is a Noetherian ring.
\end{lem}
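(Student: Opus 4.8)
The plan is to prove the contrapositive: if $R$ is not Noetherian, then $\omega(\Gamma^{\prime}(R))=\infty$. The first step is to translate the adjacency condition into a statement about principal ideals. Since $a\in Rb$ is equivalent to $Ra\subseteq Rb$, two distinct vertices $a,b$ are adjacent in $\Gamma^{\prime}(R)$ exactly when $Ra$ and $Rb$ are incomparable under inclusion. Hence a clique of $\Gamma^{\prime}(R)$ is precisely a family of elements whose principal ideals form an antichain in the poset $\mathcal{P}$ of nonzero proper principal ideals of $R$, and $\omega(\Gamma^{\prime}(R))$ equals the width (the supremum of antichain sizes) of $\mathcal{P}$. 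So it suffices to exhibit, for a non-Noetherian $R$, an infinite antichain in $\mathcal{P}$.

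Next I would extract a candidate sequence. As $R$ is not Noetherian it has an ideal $I$ that is not finitely generated, so one can choose $a_1,a_2,\dots\in I$ with $a_{n+1}\notin(a_1,\dots,a_n)$ for every $n$, giving a strictly increasing chain $(a_1)\subsetneq(a_1,a_2)\subsetneq\cdots$. One half of the incomparability is then immediate: for $i<j$ we have $Ra_i\subseteq(a_1,\dots,a_{j-1})$ while $a_j\notin(a_1,\dots,a_{j-1})$, so $a_j\notin Ra_i$, i.e. $Ra_j\not\subseteq Ra_i$. Thus no later principal ideal in the list sits below an earlier one.

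The main obstacle is the reverse inclusion: the chosen $a_i$ need not satisfy $Ra_i\not\subseteq Ra_j$ for $i<j$, so a priori the ideals $Ra_n$ might form a chain rather than an antichain, and a chain contributes nothing to $\omega$. I would resolve this by a dichotomy together with the structure of $\mathcal{P}$. By a Ramsey/Dilworth argument the sequence $(Ra_n)$ either contains an infinite antichain---in which case we are done---or an infinite strictly ascending chain of principal ideals, and the crux is to rule out the second alternative. This is exactly where finiteness of $\omega$ must force structure on $R$: replacing principal generators by idempotent ones, from a strictly ascending chain $e_1R\subsetneq e_2R\subsetneq\cdots$ I would form the idempotents $f_n=e_{n+1}(1-e_n)$, which are nonzero and pairwise orthogonal, so the $f_nR$ are pairwise incomparable and give an infinite antichain. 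Either way $\mathcal{P}$ has infinite width, contradicting $\omega(\Gamma^{\prime}(R))<\infty$. I expect this passage from an ascending chain to a genuine antichain to be the delicate point, since it is where the idempotent (von Neumann regular) structure is invoked; some such structural input is genuinely needed, for in a ring whose principal ideals are totally ordered a non-Noetherian $R$ can have an edgeless cozero-divisor graph, and then $\omega=1<\infty$ with $R$ not Noetherian.
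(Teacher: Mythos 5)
Your proposal is essentially correct and is in fact more careful than the paper's own argument; it exposes a real problem with the lemma as stated. The paper's proof is a one-liner: it asserts that a non-finitely-generated ideal admits an infinite generating set $(x_i)_{i\in \Lambda}$ that is irredundant, in the sense that no $x_i$ lies in the ideal generated by the remaining generators; pairwise incomparability of the $Rx_i$ is then immediate and the $x_i$ form an infinite clique. The gap is exactly the one you isolate: such an irredundant generating set need not exist, and the natural substitute (choosing $a_{n+1}\notin(a_1,\dots,a_n)$) only yields one of the two required non-inclusions. Your counterexample settles the matter: in a non-Noetherian valuation domain (value group $\mathbb{Q}$, say) any two nonzero nonunits are comparable under divisibility, so $\Gamma^{\prime}(R)$ is edgeless and $\omega(\Gamma^{\prime}(R))=1<\infty$ while $R$ is not Noetherian. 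So the lemma is false for arbitrary rings and no proof of the stated version can succeed; the paper's proof fails precisely at the claimed existence of the irredundant generating set (the maximal ideal of such a valuation domain has none).

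Your repair is sound but proves a different, weaker statement: starting from $a_{n+1}\notin(a_1,\dots,a_n)$ you get $Ra_j\not\subseteq Ra_i$ for $i<j$, Ramsey's theorem then yields either an infinite antichain of principal ideals (an infinite clique, done) or an infinite strictly ascending chain, and in the latter case you need von Neumann regularity to replace generators by idempotents $e_1R\subsetneq e_2R\subsetneq\cdots$ and orthogonalize via $f_n=e_{n+1}(1-e_n)=e_{n+1}-e_n$, which are nonzero, idempotent and pairwise orthogonal, hence pairwise incomparable. This correctly establishes the lemma under the added hypothesis that $R$ is von Neumann regular. That happens to be the only setting in which the lemma feeds the paper's main theorems (it is used to show a von Neumann regular ring with finite clique number is a finite product of fields), so the downstream results survive with your version; but as a proof of the lemma as literally stated, the von Neumann regular hypothesis is an essential addition, not an optional convenience, and the same caveat infects the later proposition that cites ``a similar argument'' to conclude that the maximal ideal of a local ring with null cozero-divisor graph is principal.
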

\begin{proof}
 {It is enough to show that every ideal of $R$ is finitely generated.  Suppose to the contrary,  there exists an ideal $I$ of $R$ which is generate by the set $(x_i)_{i\in \Lambda}$, where $|\Lambda|=\infty$ and it is not  generate by the set $(x_i)_{i\in \Upsilon}$, where $\Upsilon=\Lambda \setminus \{i\}$, for every $i\in \Lambda$. Thus $x_i\not\in Rx_j$ and $x_j\not\in Rx_i$,  for every two distinct elements $i,j\in \Lambda$. Hence the set $(x_i)_{i\in \Lambda}$ is a clique of $\Gamma^{\prime}(R)$ and so $\omega(\Gamma^{\prime}(R))=\infty$, which is a contradiction. Therefore, every ideal of $R$ is finitely generated.
   }
\end{proof}

\begin{lem}\label{color1}
 Let $R$ be a von Neumann regular ring. If  $\omega(\Gamma^{\prime}(R))<\infty$, then $R\cong F_1\times\cdots \times F_n$, where every $F_i$ is a field and $|\mathrm{Min}(R)|=n$.
\end{lem}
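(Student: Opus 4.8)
The plan is to use the Noetherian conclusion of Lemma~\ref{color} to upgrade the von Neumann regularity of $R$ to the Artinian property, and then invoke the structure theorem for Artinian rings. First I would record the two standard features of a von Neumann regular ring that will be needed. It is reduced: if $r^2=0$ and $r=r^2s$, then $r=0$. Moreover every prime ideal $\mathfrak{p}$ of $R$ is maximal, because $R/\mathfrak{p}$ is again von Neumann regular and an integral domain, and a von Neumann regular domain is a field (for $0\neq r=r^2s$ one gets $r(1-rs)=0$, whence $1=rs$). Thus $R$ is reduced with $\mathrm{dim}(R)=0$.

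Next, since $\omega(\Gamma^{\prime}(R))<\infty$, Lemma~\ref{color} shows that $R$ is Noetherian. A Noetherian ring of Krull dimension zero is Artinian, so $R$ is Artinian. By the structure theorem for commutative Artinian rings, $R$ decomposes as a finite direct product $R\cong R_1\times\cdots\times R_n$ of Artinian local rings. Because $R$ is reduced, each factor $R_i$ is a reduced Artinian local ring; in such a ring the (nilpotent) maximal ideal must vanish, so $R_i$ is a field $F_i$. Hence $R\cong F_1\times\cdots\times F_n$.

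Finally, I would count the minimal primes. In $F_1\times\cdots\times F_n$ the prime ideals are exactly the ideals $\mathfrak{p}_i=F_1\times\cdots\times 0\times\cdots\times F_n$ with $0$ in the $i$-th coordinate, for $1\le i\le n$; there are precisely $n$ of them and each is at once minimal and maximal. Therefore $|\mathrm{Min}(R)|=n$, which completes the identification.

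The routine part is the chain of structure-theoretic reductions. The step I expect to carry the real weight is the passage from ``Noetherian and dimension zero'' to ``Artinian'' together with the observation that a reduced Artinian local ring is a field, since it is precisely this that converts the finiteness hypothesis on the clique number, via Lemma~\ref{color}, into the desired product-of-fields description.
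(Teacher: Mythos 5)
Your proof is correct and follows essentially the same route as the paper: reduced and zero-dimensional from von Neumann regularity, Noetherian from Lemma \ref{color}, hence reduced Artinian, then the structure theorem for Artinian rings. The only difference is that you prove directly the standard facts (reducedness, $\dim(R)=0$, reduced Artinian local rings are fields) that the paper obtains by citing Huckaba and Atiyah--Macdonald.
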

\begin{proof}
 { By \cite[Theorem 3.1]{Huckaba}, $R$ is a reduced ring and ${\rm dim}(R)=0$. Moreover, by Lemma \ref{color}, $R$ is a Noetherian ring. Thus $R$ is a reduced Artinian ring. The result now follows from \cite[Theorem 8.7]{ati}.}
\end{proof}

\begin{lem}\label{lemma1d}
 Let $R$ be a ring. Then the following statements are equivalent.

 $(1)$ $a-b$ is an edge of $\Gamma^{\prime}(R)$.

 $(2)$ $Ra\nsubseteq Rb$ and $Rb\nsubseteq Ra$.
\end{lem}
\begin{proof}
{It is straightforward. }

\end{proof}
\begin{lem}\label{lemma neighbour}
 Let $R$ be a ring and $x,y\in V(\Gamma^{\prime}(R))$ such that $Ra=Rb$. Then $N(a)=N(b)$.
\end{lem}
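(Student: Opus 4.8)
The plan is to reduce everything to the ideal-theoretic criterion for adjacency supplied by Lemma \ref{lemma1d}, which says that two distinct vertices $u,v$ are adjacent precisely when neither of $Ru,Rv$ contains the other. The key observation is that this criterion involves $a$ only through the principal ideal $Ra$; therefore replacing $a$ by any $b$ with $Ra=Rb$ cannot alter which vertices $a$ is adjacent to. In effect, adjacency in $\Gamma^{\prime}(R)$ is a function of the principal ideals attached to the two endpoints, and $Ra=Rb$ makes $a$ and $b$ indistinguishable for this purpose.

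Concretely, I would fix an arbitrary vertex $c\in V(\Gamma^{\prime}(R))$ distinct from both $a$ and $b$, and apply Lemma \ref{lemma1d} to rewrite the statement $c\in N(a)$ as the pair of conditions $Rc\nsubseteq Ra$ and $Ra\nsubseteq Rc$. Substituting the hypothesis $Ra=Rb$ turns these into $Rc\nsubseteq Rb$ and $Rb\nsubseteq Rc$, which by Lemma \ref{lemma1d} is exactly the assertion $c\in N(b)$. Hence, for every vertex $c$ other than $a$ and $b$, we obtain $c\in N(a)$ if and only if $c\in N(b)$.

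It remains only to dispose of the two excluded vertices $a$ and $b$, and here the hypothesis $Ra=Rb$ is again decisive: it forces $Ra\subseteq Rb$ (and $Rb\subseteq Ra$), so Lemma \ref{lemma1d} shows that $a$ and $b$ are not adjacent, while trivially no vertex is adjacent to itself. Consequently $a,b\notin N(a)\cup N(b)$, so both neighbor sets are contained in $V(\Gamma^{\prime}(R))\setminus\{a,b\}$, where the equivalence of the previous paragraph applies uniformly. Combining the two parts gives $N(a)=N(b)$.

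I do not anticipate a genuine obstacle; the only point demanding a moment of care is the bookkeeping above, namely checking that neither $a$ nor $b$ can belong to one neighbor set without belonging to the other. Once adjacency is phrased purely in terms of the principal ideals via Lemma \ref{lemma1d}, the equality of the neighbor sets is immediate.
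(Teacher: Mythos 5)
Your argument is correct and is essentially the paper's own proof: both translate adjacency into the ideal conditions of Lemma \ref{lemma1d} and substitute $Ra=Rb$. The only difference is that you explicitly check that $a$ and $b$ cannot lie in either neighbor set, a small piece of bookkeeping the paper leaves implicit.
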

\begin{proof}
 {Suppose that $c\in N(a)$. By Lemma \ref{lemma1d}, $Ra\nsubseteq Rc$ and $Rc\nsubseteq Ra$. Since $Ra=Rb$, we deduce that $Rb\nsubseteq Rc$ and $Rc\nsubseteq Rb$ and thus by Lemma \ref{lemma1d}, $c\in N(b)$. Hence  $N(a)\subseteq N(b)$. Similarly, $N(b)\subseteq N(a)$, as desired.
  }
\end{proof}

\begin{lem}\label{perfect13}
 Let $2 \leq n<\infty$ be an integer and
$R=\mathbb{Z}_{2}\times \cdots \times \mathbb{Z}_{2}$ {\rm ($n$ \textit{times})}.
  Then  $$\omega(\Gamma^{\prime}(R))=\chi(\Gamma^{\prime}(R))={n\choose [n/2]}.$$
\end{lem}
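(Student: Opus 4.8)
The plan is to reinterpret $\Gamma'(R)$ combinatorially by identifying each element of $R=\mathbb{Z}_2\times\cdots\times\mathbb{Z}_2$ with its support. First I would note that an element $a=(a_1,\dots,a_n)$ is a unit exactly when every coordinate equals $1$ and is zero exactly when every coordinate is $0$, so $W^*(R)$ is in bijection with the nonempty proper subsets of $[n]=\{1,\dots,n\}$ via $a\mapsto \mathrm{supp}(a)=\{i: a_i=1\}$. Since multiplication is componentwise, $Ra=\{b:\mathrm{supp}(b)\subseteq\mathrm{supp}(a)\}$, and hence $Ra\subseteq Rb$ if and only if $\mathrm{supp}(a)\subseteq\mathrm{supp}(b)$. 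Combining this with Lemma \ref{lemma1d}, two vertices $a,b$ are adjacent precisely when $\mathrm{supp}(a)$ and $\mathrm{supp}(b)$ are incomparable under inclusion. Thus $\Gamma'(R)$ is exactly the incomparability graph of the poset $P$ of nonempty proper subsets of $[n]$ ordered by containment.

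With this dictionary in hand the two graph invariants translate into poset invariants. A clique of $\Gamma'(R)$ is a set of pairwise incomparable subsets, i.e.\ an antichain of $P$, so $\omega(\Gamma'(R))$ equals the maximum antichain size in $P$. The largest antichain in the full Boolean lattice is the middle layer, of size ${n\choose[n/2]}$ by Sperner's theorem; since for $n\geq2$ this layer consists of subsets of size $[n/2]$, none of which is $\varnothing$ or $[n]$, deleting the top and bottom elements of the lattice does not shrink it. Hence $\omega(\Gamma'(R))={n\choose[n/2]}$.

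For the chromatic number I would observe that an independent set of $\Gamma'(R)$ is a set of pairwise comparable subsets, i.e.\ a chain of $P$, so a proper coloring is precisely a partition of $P$ into chains, and $\chi(\Gamma'(R))$ is the minimum number of chains covering $P$. The inequality $\chi(\Gamma'(R))\geq\omega(\Gamma'(R))={n\choose[n/2]}$ is automatic. For the reverse inequality one applies Dilworth's theorem to $P$: the minimum number of chains in a chain partition equals the maximum antichain size, which we just computed to be ${n\choose[n/2]}$. This yields a proper coloring with ${n\choose[n/2]}$ colors, giving $\chi(\Gamma'(R))\leq{n\choose[n/2]}$ and therefore $\omega(\Gamma'(R))=\chi(\Gamma'(R))={n\choose[n/2]}$.

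I expect the main obstacle to be the upper bound on $\chi$, since the clique number and the lower bound $\chi\geq\omega$ are immediate once the incomparability-graph description is established. Dilworth's theorem supplies this abstractly; if an explicit coloring is preferred, the symmetric chain decomposition of the Boolean lattice (de Bruijn--Tengbergen--Kruyswijk) furnishes ${n\choose[n/2]}$ chains concretely, and restricting those chains to the nonempty proper subsets only shortens them, keeping the count at most ${n\choose[n/2]}$. The remaining care needed is purely bookkeeping: verifying that removing $\varnothing$ and $[n]$ leaves both the maximum antichain and the chain partition at the desired value.
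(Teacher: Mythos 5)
Your proof is correct, and it takes a genuinely different route from the paper's. You first set up the dictionary $a\mapsto\mathrm{supp}(a)$, under which $Ra\subseteq Rb$ becomes $\mathrm{supp}(a)\subseteq\mathrm{supp}(b)$ and $\Gamma^{\prime}(R)$ becomes the incomparability graph of the poset of nonempty proper subsets of $[n]$; then cliques are antichains (handled by Sperner's theorem) and color classes are chains (handled by Dilworth's theorem, or concretely by the symmetric chain decomposition). The paper instead works directly with the tuples: it partitions the vertex set into layers $A_i$ according to the number of zero coordinates, checks that each layer is a clique, and then argues that vertices in the smaller layers can be matched to non-neighbors in larger layers so that the ${n\choose [n/2]}$ colors used on the middle layer suffice for the whole graph --- in effect an informal symmetric chain decomposition, carried out by hand and without naming Sperner or Dilworth. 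What your approach buys is rigor and economy: the paper's coloring step (``there are enough colors in $\Gamma^{\prime}(R)[A_t]$ to color $\Gamma^{\prime}(R)$'') is the delicate point and is only sketched, whereas Dilworth's theorem disposes of it cleanly, and your observation that incomparability graphs of posets are the relevant objects also explains, almost for free, the perfectness result of Section~3 (incomparability graphs of finite posets are perfect). What the paper's approach buys is self-containedness: it avoids citing Sperner and Dilworth and exhibits the maximum clique explicitly as the middle layer. The only bookkeeping you need --- that deleting $\varnothing$ and $[n]$ changes neither the maximum antichain nor the chain cover number --- is exactly as you describe, and is immediate for $n\geq 2$.
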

\begin{proof}
{ Let $x=(x_1,\dots,x_n)\in V(\Gamma^{\prime}(R))$. Obviously, $x_i=0$ for some $i\in \{1,\dots,n\}$.
Let $NZC(x)$ be the number zero $x_i$'s in $x$, for every $x=(x_1,\dots,x_n)\in V(\Gamma^{\prime}(R))$.
Clearly, $1\leq NZC(x)\leq n-1$, for every $x=(x_1,\dots,x_n)\in V(\Gamma^{\prime}(R))$. For every $1\leq i\leq n-1$, let
$$A_i=\{x=(x_1,\dots,x_n)\in V(\Gamma^{\prime}(R))|\,\,\, NZC(x)=i\}.$$

%
%

 It is easily seen that $V(\Gamma^{\prime}(R))=\cup_{i=1}^{n-1}A_i$ and $A_i\cap A_j=\varnothing$, for every $i\neq j$ and so $\{A_1,\dots,A_{n-1}\}$ is a partition of $V(\Gamma^{\prime}(R))$. We show that $\Gamma^{\prime}(R)[A_i]$ is a complete (induced) subgraph of $\Gamma^{\prime}(R)$, for every $1\leq i\leq n-1$. Let $x=(x_1\dots,x_n), y=(y_1,\dots,y_n)\in A_i$, for some $1\leq i\leq n-1$ and $x\neq y$. Since $ NZC(x)=NZC(y)$, there exist $1\leq i\neq j\leq n$ such that $x_i=0$, $y_i=1$ and  $x_j=1$, $y_j=0$. This implies that  $x\not\in Ry$ and $y\not\in Rx$ and so $x$ and $y$
are adjacent. Hence $\Gamma^{\prime}(R)[A_i]$ is a complete (induced) subgraph of $\Gamma^{\prime}(R)$, for every $1\leq i\leq n-1$.
 Furthermore, $|A_i|={n\choose i}$, for every $1\leq i \leq n$
and  $|A_t|\geq|A_i|$, for every $1\leq i \leq n-1$, where $t=[n/2]$. Let $i\neq j$ and $i<j<t$. Then
$|A_i|\leq|A_j|$ and
 for every  $x\in A_i$ there exists  a vertex $y\in A_j$ such that $Ry\subseteq Rx$. Thus by Lemma \ref{lemma1d}, $x$ is not adjacent to $y$ (by replacing one of the zero components of $y\in A_j$ by $1$, we have $x\in A_i$). Hence
$$\omega(\Gamma^{\prime}(R) [\cup_{i=1}^{t}A_i])=\chi(\Gamma^{\prime}(R)[\cup_{i=1}^{t}A_i])={n\choose t}.$$
 Similarly,
  $$\omega(\Gamma^{\prime}(R)[\cup_{i=t}^{n-1}A_i])=\chi(\Gamma^{\prime}(R)[\cup_{i=t}^{n-1}A_i])={n\choose t}.$$
Indeed, there are enough colors in $\Gamma^{\prime}(R)[A_t]$ to color $\Gamma^{\prime}(R)$.  Thus  $$\omega(\Gamma^{\prime}(R)=\chi(\Gamma^{\prime}(R))={n\choose t}.$$}\end{proof}

\begin{remark}\label{remark2}
{Let $G$ be a graph and $x\in V(G)$. If there exists a vertex $y\in V(G)$  which is not adjacent to $x$ and $N(x)=N(y)$, then  $\omega(G)=\omega(G\setminus \{x\})$ and  $\chi(G)=\chi(G\setminus \{x\})$.
}
\end{remark}

We are now in a position to state our main result of this section.
\begin{thm}\label{perfect}
 Let $R$ be a von Neumann regular ring and $|\mathrm{Min}(R)|=n$. If  $|\omega(\Gamma^{\prime}(R))|<\infty$, then  $$\omega(\Gamma^{\prime}(R))=\chi(\Gamma^{\prime}(R))={n\choose [n/2]}.$$
\end{thm}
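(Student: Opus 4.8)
The plan is to reduce the general situation to the concrete product-of-fields model and then to the already-settled case of $\mathbb{Z}_2\times\cdots\times\mathbb{Z}_2$ from Lemma \ref{perfect13}. First I would apply Lemma \ref{color1}: since $R$ is von Neumann regular with $\omega(\Gamma^{\prime}(R))<\infty$, we may assume $R\cong F_1\times\cdots\times F_n$ with each $F_i$ a field and $|\mathrm{Min}(R)|=n$. In this model every principal ideal is transparent, which is exactly what makes the adjacency relation computable.

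The key observation is that for $a=(a_1,\dots,a_n)\in R$ the ideal $Ra$ depends only on the \emph{support} $\mathrm{supp}(a)=\{\,i:a_i\neq 0\,\}$: because each $F_i$ is a field, $Ra=\prod_{i=1}^{n}I_i$ with $I_i=F_i$ when $a_i\neq 0$ and $I_i=0$ otherwise. Hence $Ra\subseteq Rb$ if and only if $\mathrm{supp}(a)\subseteq\mathrm{supp}(b)$, and by Lemma \ref{lemma1d} two vertices $a,b$ are adjacent in $\Gamma^{\prime}(R)$ exactly when their supports are incomparable. Moreover the vertices of $\Gamma^{\prime}(R)$ are precisely the elements whose support is a nonempty proper subset of $\{1,\dots,n\}$ (support $\varnothing$ gives $0$, the full support gives a unit).

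Next I would collapse each fibre of the support map to a single vertex. If $a\neq b$ but $\mathrm{supp}(a)=\mathrm{supp}(b)$, then $Ra=Rb$, so $a$ and $b$ are non-adjacent and, by Lemma \ref{lemma neighbour}, $N(a)=N(b)$; Remark \ref{remark2} then permits deleting one of them without changing $\omega$ or $\chi$. Removing all but one representative of each attainable support yields an induced subgraph $H$ whose vertices are exactly the nonempty proper subsets of $\{1,\dots,n\}$, two being adjacent precisely when incomparable. This adjacency is identical to that of $\Gamma^{\prime}(\mathbb{Z}_2\times\cdots\times\mathbb{Z}_2)$, so $H\cong\Gamma^{\prime}(\mathbb{Z}_2\times\cdots\times\mathbb{Z}_2)$. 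Since each deletion preserved both parameters, $\omega(\Gamma^{\prime}(R))=\omega(H)$ and $\chi(\Gamma^{\prime}(R))=\chi(H)$, and Lemma \ref{perfect13} gives $\omega(H)=\chi(H)={n\choose[n/2]}$, which finishes the argument.

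The one delicate point is the collapsing step when some $F_i$ is infinite, for then a single support carries infinitely many vertices and Remark \ref{remark2} cannot literally be iterated to exhaustion. The clean formulation is that vertices sharing a support form an independent set of mutual ``false twins'' (pairwise non-adjacent, identical neighbourhoods), and replacing such an independent set by one representative changes neither the clique number (a clique meets it in at most one vertex) nor the chromatic number (all its vertices may receive a common colour); Remark \ref{remark2} is exactly the one-vertex instance of this principle. Everything else---that principal ideals in a finite product of fields are governed by supports, and that the collapsed graph is genuinely isomorphic to the $\mathbb{Z}_2$-case---is routine verification.
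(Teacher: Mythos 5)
Your proposal is correct and follows essentially the same route as the paper: reduce to $F_1\times\cdots\times F_n$ via Lemma \ref{color1}, collapse vertices with equal principal ideals (equivalently, equal supports) using Lemma \ref{lemma neighbour} and Remark \ref{remark2} onto the set of $0$--$1$ vectors, identify the result with $\Gamma^{\prime}(\mathbb{Z}_2\times\cdots\times\mathbb{Z}_2)$, and invoke Lemma \ref{perfect13}. Your treatment of the collapsing step when some $F_i$ is infinite (replacing a whole class of false twins at once rather than iterating a one-vertex deletion ``$|V(\Gamma^{\prime}(R))\setminus A|$ times'') is in fact more careful than the paper's wording, which tacitly assumes finitely many deletions.
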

\begin{proof}
 { By Lemma \ref{color1},
  $R\cong F_1\times \cdots  \times F_n$, where $F_i$ is a field, for every $1\leq i\leq n<\infty$. Let
 $$A=\{(x_1,\dots,x_n)\in V(\Gamma^{\prime}(R))|\,\,\ x_i\in \{0,1\}\,\,\mathrm{for\,\, every}\,\,1\leq i\leq n\}.$$
Consider the following claims:

\textbf{Claim 1.}
 $\omega(\Gamma^{\prime}(R)[A])=\omega(\Gamma^{\prime}(R))$ and  $\chi(\Gamma^{\prime}(R)[A])=\chi(\Gamma^{\prime}(R))$.

  Suppose that $x=(x_1,\dots,x_n)$ and $y=(y_1,\dots,y_n)$ are vertices of $\Gamma^{\prime}(R)$. Define the relation $\sim$ on $V(\Gamma^{\prime}(R))$ as follows:
$x \sim y$, whenever ``$x_i=0$ if and only if $y_i=0$'',
for every $1\leq i\leq n$. Obviously,  $\sim$ is an equivalence relation on $V(\Gamma^{\prime}(R))$. Thus
$V(\Gamma^{\prime}(R))=\cup_{i=1}^{2^n-2}[x]_i$, where $[x]_i$ is the equivalence class of $x_i$ (We note that the number of equivalence classes is $2^n-2$). Let $[x]$ be a equivalence class of $x$. Then
$|[x]\cap A|=1$ and so one may choose $a\in [x]\cap A$ and $b\in [x]\setminus \{a\}$. Since
  $Ra=Rb$, by Lemma \ref{lemma neighbour}, $N(a)=N(b)$. By Remark \ref{remark2}, $\omega(\Gamma^{\prime}(R))=\omega(\Gamma^{\prime}(R)\setminus \{b\})$ and  $\chi(\Gamma^{\prime}(R))=\chi(\Gamma^{\prime}(R)\setminus \{b\})$.
If we continue this procedure for $|V(\Gamma^{\prime}(R))\setminus A|$ times, then we get
 $\omega(\Gamma^{\prime}(R)[A])=\omega(\Gamma^{\prime}(R))$ and  $\chi(\Gamma^{\prime}(R)[A])=\chi(\Gamma^{\prime}(R))$.

 \textbf{Claim 2.} $\omega(\Gamma^{\prime}(R)[A])=\omega(\Gamma^{\prime}(S))$ and  $\chi(\Gamma^{\prime}(R)[A])=\chi(\Gamma^{\prime}(S))$, where
  $S=\mathbb{Z}_{2}\times \cdots \times \mathbb{Z}_{2}$ ($n$ times).

 Let $x=(x_1,\dots,x_n)\in S\setminus \{0,1\}$ and $y=(y_1,\dots,y_n)\in A$. Consider the map
 $\varphi : S\setminus \{0,1\}\longrightarrow A$ defined by the rule  $\varphi(x)=y$, whenever $x_i=0$ if and only if $y_i=0$.
 It is not hard to check that $\varphi$ is well-defined, bijective and if $x,y\in S\setminus \{0,1\}$ such that $x$ is adjacent $y$, then  $\varphi(x)$ is adjacent $\varphi(y)$. This implies that
 $\Gamma^{\prime}(S)\cong \Gamma^{\prime}(R)[A]$ and thus
  $\omega(\Gamma^{\prime}(R)[A])=\omega(\Gamma^{\prime}(S))$ and  $\chi(\Gamma^{\prime}(R)[A])=\chi(\Gamma^{\prime}(S))$.

 By Claims 1,2 and  Lemma \ref{perfect13},
   $$\omega(\Gamma^{\prime}(R))=\chi(\Gamma^{\prime}(R))=\omega(\Gamma^{\prime}(R)[A])=\chi(\Gamma^{\prime}(R)[A])=\omega(\Gamma^{\prime}(S))=\chi(\Gamma^{\prime}(S))={n\choose [n/2]}.$$}
\end{proof}

We close this section with the following proposition.
\begin{prop}\label{perfecte}
 Let $R$ be a ring which is not an integral domain. If  $|\omega(\Gamma^{\prime}(R))|<\infty$, then  $\Gamma^{\prime}(R)$ is a null graph if and only if $(R,\mathfrak{m})$ is local ring and $\mathfrak{m}$ is principal.
\end{prop}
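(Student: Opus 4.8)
The plan is to reformulate nullity through Lemma~\ref{lemma1d}: the graph $\Gamma^{\prime}(R)$ has no edge if and only if for every pair of distinct non-zero non-units $a,b$ one has $Ra\subseteq Rb$ or $Rb\subseteq Ra$, i.e.\ the principal ideals generated by elements of $W^*(R)$ are totally ordered by inclusion. I would phrase both directions in terms of this chain condition.

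For the ($\Rightarrow$) direction I first argue that $R$ is local. If $R$ had two distinct maximal ideals $\mathfrak{m}_1,\mathfrak{m}_2$, then by maximality neither is contained in the other, so I may pick $a\in\mathfrak{m}_1\setminus\mathfrak{m}_2$ and $b\in\mathfrak{m}_2\setminus\mathfrak{m}_1$. Both are non-zero (as $0$ lies in every maximal ideal) non-units, hence vertices; moreover $Ra\subseteq\mathfrak{m}_1$ and $Rb\subseteq\mathfrak{m}_2$ force $Ra\nsubseteq Rb$ and $Rb\nsubseteq Ra$, so $a-b$ is an edge, contradicting nullity. Thus $(R,\mathfrak{m})$ is local. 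Next, since $\omega(\Gamma^{\prime}(R))<\infty$, Lemma~\ref{color} gives that $R$ is Noetherian, so $\mathfrak{m}=(a_1,\dots,a_k)$ for finitely many non-zero $a_i$ (note $\mathfrak{m}\neq 0$, since otherwise $R$ would be a field, hence a domain). By the chain condition the finitely many ideals $(a_1),\dots,(a_k)$ are totally ordered, so one of them, say $(a_j)$, contains all the others; then $\mathfrak{m}=(a_1,\dots,a_k)\subseteq(a_j)\subseteq\mathfrak{m}$ shows that $\mathfrak{m}=(a_j)$ is principal.

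For the ($\Leftarrow$) direction, I assume $(R,\mathfrak{m})$ is local with $\mathfrak{m}=(m)$; again $R$ is Noetherian by Lemma~\ref{color}. The key tool is the Krull intersection theorem, which yields $\bigcap_{n\geq 1}\mathfrak{m}^n=0$. Given any non-zero non-unit $x$, there is therefore a largest $n\geq 1$ with $x\in\mathfrak{m}^n=(m^n)$; writing $x=m^n r$, maximality of $n$ forces $r\notin\mathfrak{m}$, so $r$ is a unit and $(x)=(m^n)$. Hence every principal ideal generated by a non-zero non-unit equals some $(m^n)$, and these form the chain $(m)\supseteq(m^2)\supseteq\cdots$. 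Consequently any two non-zero non-units generate comparable ideals, so by Lemma~\ref{lemma1d} no two vertices are adjacent and $\Gamma^{\prime}(R)$ is a null graph.

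I expect the reverse direction to be the main obstacle, since it is where genuine structure theory enters: the chain condition on principal ideals must be \emph{produced}, and the cleanest route is the Krull intersection theorem together with the observation that a Noetherian local ring with principal maximal ideal is a chain ring in which every non-zero element factors as a unit times a power of a uniformizer. The forward direction, by contrast, is essentially a direct exploitation of the chain reformulation. The hypothesis that $R$ is not a domain plays no essential role in either implication beyond excluding the degenerate field case $\mathfrak{m}=0$.
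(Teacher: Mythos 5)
Your proof is correct, and while the forward direction matches the paper's (the same two-maximal-ideals argument for locality, and your Noetherian-plus-chain-condition argument for principality of $\mathfrak{m}$ is just a concrete rendering of the paper's ``similar argument to the proof of Lemma \ref{color}''), your converse takes a genuinely different and more direct route. The paper first proves $\dim(R)=0$ by showing any minimal prime $\mathfrak{p}$ equals $\mathfrak{m}$ (using the Krull intersection theorem to rule out $\mathfrak{p}\subsetneq\mathfrak{m}$), concludes that $R$ is Artinian local, and then invokes \cite[Proposition 8.8]{ati} to get that every ideal is principal --- really, that the ideals form the chain $\mathfrak{m}\supseteq\mathfrak{m}^2\supseteq\cdots$, which is what actually kills all edges. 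You instead apply the Krull intersection theorem directly to each vertex $x$, extracting the largest $n$ with $x\in\mathfrak{m}^n$ and showing $(x)=(m^n)$, so that all principal ideals of vertices lie in a single chain. This bypasses the dimension-zero and Artinian structure theory entirely, and it also makes explicit the comparability of principal ideals, which the paper's final sentence (``every ideal of $R$ is principal and hence $\Gamma^{\prime}(R)$ is a null graph'') leaves somewhat implicit --- principality of all ideals alone is not the operative fact; the chain structure is. Your closing observation that the ``not an integral domain'' hypothesis is inessential to your converse is also accurate: the paper needs it only to pick a nonzero element of a minimal prime, a step your argument never takes.
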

\begin{proof}
 { First, suppose that $\Gamma^{\prime}(R)$ is a null graph. If $R$ is  not local, then one may choose  $x\in \mathfrak{m}_1\setminus\mathfrak{m}_2$ and $y\in \mathfrak{m}_2\setminus\mathfrak{m}_1$, where $\mathfrak{m}_1,\mathfrak{m}_2\in\mathrm{Max}(R) $. Since $x$ is not adjacent to $y$, we find a contradiction. Thus  $(R,\mathfrak{m})$ is local ring. Also, by a similar argument to the proof of Lemma \ref{color}, one may show that  $\mathfrak{m}$ is principal.

To prove the converse, suppose that $(R,\mathfrak{m})$ is a local ring and $\mathfrak{m}$ is principal. We show that dim$(R)=0$. It is enough to show that  $\mathfrak{m}\in\mathrm{Min}(R)$. Assume that  $\mathfrak{p}\subseteq\mathfrak{m}$, for some $\mathfrak{p}\in\mathrm{Min}(R)$. Since $R$ is not an integral domain,  $\mathfrak{p}\neq (0)$ and so we  may pick $0\neq a\in \mathfrak{p}$. Since $\mathfrak{m}$ is principal,  $\mathfrak{m}=Rx$, for some $x\in R$. If $x\in \mathfrak{p}$, then $\mathfrak{p}=\mathfrak{m}$ and thus dim$(R)=0$. So let $x\not\in \mathfrak{p}$.
Since $\mathfrak{p}\subseteq \mathfrak{m}$, $a=r_1x$ for some $r_1\in R$. Also $x\not\in \mathfrak{p}$ implies that  $r_1\in \mathfrak{p}$ and thus $r_1=r_2x$, for some $r_2\in R$. Hence $a=r_2x^2$ and so $a\in\mathfrak{m}^2 $. If we continue this procedure,
then $a\in\mathfrak{m}^n $, for every positive integer $n$. Therefore $a\in \cap_{n=1}^{\infty} \mathfrak{m}^n$. This, together with \cite[Corolary 10.19]{ati} imply that $a=0$, a contradiction. Hence $\mathfrak{p}=\mathfrak{m}$ and so dim$(R)=0$.  Since $R$ is Noetherian with dim$(R)=0$, $R$ is an Artinian local ring. Finally, by \cite[Proposition 8.8]{ati}, every ideal of $R$ is principal and hence  $\Gamma^{\prime}(R)$ is a null graph.
 }\end{proof}


{\section{Perfectness of $\Gamma^{\prime}(R)$}\vspace{-2mm}
Let $R$ be a von Neumann regular ring and  $\omega(\Gamma^{\prime}(R))<\infty$. In this section, we show that $\Gamma^{\prime}(R)$ is a  perfect graph. We begin with the following celebrate result.

%
%
%

\begin{lem}\label{Color1}
 {\rm(\cite{Diestel} The Strong Perfect Graph Theorem)}  A graph $G$ is perfect if and only if neither $G$ nor $\overline{G}$ contains an induced odd cycle of length at least 5.
\end{lem}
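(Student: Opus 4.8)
The plan is to recognize at the outset that this statement is not a result to be established within the present paper but rather the celebrated \emph{Strong Perfect Graph Theorem}, proved by Chudnovsky, Robertson, Seymour and Thomas and here quoted from \cite{Diestel}. Consequently the honest ``proof'' is a citation: one invokes the result as a black box, with no independent derivation. I would therefore not attempt to reprove it, since doing so is entirely out of scope; its role here is purely as a tool for the perfectness argument that follows. The author's proof environment is, accordingly, empty or a bare reference.

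If one nevertheless wanted to indicate what a genuine proof involves, I would organize it into the two implications. The easy direction is a quick counting argument: an induced odd cycle $C_{2k+1}$ with $k\ge 2$ (an ``odd hole'') has $\omega=2$ but $\chi=3$, while its complement (an ``odd antihole'') has $\omega=k$ but $\chi=k+1$; hence a graph containing either as an induced subgraph cannot be perfect, so perfection forces their absence in $G$ and in $\overline{G}$. The hard direction is Berge's conjecture: every graph with no odd hole and no odd antihole (a \emph{Berge graph}) is perfect. The strategy of Chudnovsky et al. is structural: one shows that every Berge graph either lies in one of a few basic perfect classes (bipartite graphs, line graphs of bipartite graphs, their complements, and double split graphs) or admits one of a short list of structural decompositions (a $2$-join, the complement of a $2$-join, or a balanced skew partition). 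One then checks that the basic classes are perfect and that each decomposition preserves perfection, closing the argument by induction on $|V(G)|$.

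The main obstacle is precisely this hard direction: establishing the decomposition theorem for Berge graphs is a deep and lengthy argument spanning well over a hundred pages, and reproducing it is neither feasible nor appropriate here. For the purposes of this paper, however, only the \emph{statement} is needed. The forward-looking plan is to combine it with the structure of $\Gamma^{\prime}(R)$: since $R$ is von Neumann regular with finite clique number, Lemma \ref{color1} gives $R\cong F_1\times\cdots\times F_n$, and one then verifies directly that $\Gamma^{\prime}(R)$ contains no induced odd hole or odd antihole of length at least $5$, so that Lemma \ref{Color1} yields perfectness at once.
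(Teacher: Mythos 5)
Your proposal matches the paper exactly: the lemma is quoted as the Strong Perfect Graph Theorem with a citation to \cite{Diestel} and no proof is given in the paper, which is the appropriate treatment for a deep external result used as a black box. Your sketch of the easy direction and of the Chudnovsky--Robertson--Seymour--Thomas decomposition strategy is accurate but, as you note, out of scope here.
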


\begin{thm}\label{perfect14}
 Let
$R=\mathbb{Z}_{2}\times \cdots \times \mathbb{Z}_{2}$ {\rm ($n$ times)}.
  Then $\Gamma^{\prime}(R)$ is perfect.
\end{thm}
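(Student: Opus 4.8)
The plan is to first translate the ring-theoretic adjacency condition into a purely order-theoretic one, and then invoke the Strong Perfect Graph Theorem (Lemma~\ref{Color1}). Identify each vertex $a=(a_1,\dots,a_n)\in V(\Gamma'(R))$ with its support $\mathrm{supp}(a)=\{i:a_i=1\}$. Since the only unit of $\mathbb{Z}_2$ is $1$, the non-zero non-unit elements correspond bijectively to the nonempty proper subsets of $[n]=\{1,\dots,n\}$ (for $n\le 1$ the graph is edgeless, hence trivially perfect, so assume $n\ge 2$). A one-line check shows $a\in Rb$ iff $\mathrm{supp}(a)\subseteq\mathrm{supp}(b)$: writing $a=rb$ componentwise forces $a_i=0$ wherever $b_i=0$ and leaves the remaining coordinates free. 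Hence $a$ and $b$ are adjacent in $\Gamma'(R)$ iff their supports are \emph{incomparable} under inclusion. In short, $\Gamma'(R)$ is the incomparability graph of the poset $P$ of nonempty proper subsets of $[n]$ ordered by $\subseteq$, and $\overline{\Gamma'(R)}$ is the comparability graph of $P$. This identification is the key insight and makes the layered clique structure already used in Lemma~\ref{perfect13} transparent.

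By Lemma~\ref{Color1} it now suffices to prove that neither $\Gamma'(R)$ nor $\overline{\Gamma'(R)}$ contains an induced odd cycle of length at least $5$. For $\overline{\Gamma'(R)}$, the comparability graph, I would orient each edge $S\!-\!T$ as $S\to T$ whenever $S\subsetneq T$; this orientation is transitive. In an induced cycle of length $\ge 4$, if some vertex had one incoming and one outgoing cycle-edge, transitivity would force an edge between its two cycle-neighbours, contradicting chordlessness. Thus every vertex of such a cycle is a local source or a local sink, so sources and sinks alternate around it and the cycle has even length. Hence $\overline{\Gamma'(R)}$ has no induced odd cycle of length $\ge 5$.

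The remaining condition, that $\Gamma'(R)$ itself has no induced odd cycle of length $\ge 5$, is the main obstacle: for the incomparability graph the transitive-orientation trick does not apply, since it can contain long induced paths. I would handle it by complementation. An induced odd cycle $C_{2k+1}$ in $\Gamma'(R)$ corresponds, inside $\overline{\Gamma'(R)}$, to an induced odd antihole $\overline{C_{2k+1}}$; and because the restriction of a transitive orientation to any vertex subset is again transitive, every induced subgraph of a comparability graph is itself a comparability graph. So it is enough to know that no odd antihole $\overline{C_{2k+1}}$ with $k\ge 2$ is a comparability graph. For $k=2$ this is immediate from $\overline{C_5}\cong C_5$ together with the previous paragraph; the case $k\ge 3$ is the genuinely delicate point and is exactly the classical fact (from Gallai's theory of comparability graphs) that odd antiholes admit no transitive orientation.

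Granting this, $\Gamma'(R)$ has no induced odd hole, and with the comparability-graph case the Strong Perfect Graph Theorem yields perfection. I expect the odd-antihole step to be the crux; if one prefers to keep the argument self-contained rather than cite Gallai, the cleanest alternative is to package it as: $\Gamma'(R)$ is a cocomparability graph, and any linear extension of $P$ restricts to an \emph{umbrella-free} ordering of the cycle vertices (if $a\prec b\prec c$ with $a,c$ comparable to $b$ and $a\prec b\prec c$ refining $\subseteq$, then $a\subseteq b\subseteq c$, so $a,c$ would be comparable, i.e.\ non-adjacent). Cocomparability graphs are perfect, equivalently by the Perfect Graph Theorem the complement of the (perfect) comparability graph is perfect; either route delivers the conclusion, the essential content in all cases being the impossibility of a transitive orientation on an odd antihole.
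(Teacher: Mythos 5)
Your proof is correct, and while it shares the paper's top-level strategy (apply the Strong Perfect Graph Theorem of Lemma~\ref{Color1} and rule out odd holes in both $\Gamma'(R)$ and its complement), the way you discharge the two conditions is genuinely different. The paper works directly with the containment relation $Ra_i\subseteq Ra_j$ and chases chains of inclusions around a hypothetical induced odd cycle until a forbidden comparability appears; your contribution is to name what is really going on: since $a\in Rb$ iff $\mathrm{supp}(a)\subseteq\mathrm{supp}(b)$ over $\mathbb{Z}_2$, the graph $\Gamma'(R)$ is exactly the incomparability graph of the proper part of the Boolean lattice, so $\overline{\Gamma'(R)}$ is a comparability graph. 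From there your source/sink alternation argument for the complement is a cleaner packaging of the paper's Claim~2, and your reduction of Claim~1 to ``odd antiholes are not comparability graphs'' replaces the paper's longest computation with a citation to Gallai (or, via your alternative route, to the perfection of comparability graphs plus Lov\'asz's Perfect Graph Theorem, which avoids the SPGT altogether and is in that sense more economical than the paper). The trade-off is self-containedness: the paper's Subclaims~1 and~2 are in effect an ad hoc proof, for this special case, of the very fact you import from Gallai, so your argument is shorter and more conceptual but leans on external classical results at exactly the step you correctly identify as the crux. It is worth noting that the paper's chain argument never uses anything about $\mathbb{Z}_2^n$ beyond Lemma~\ref{lemma1d}, so your comparability-graph reformulation is not just equivalent but arguably the honest statement of what both proofs establish.
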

\begin{proof}
 { By Lemma \ref{Color1}, it is enough to prove the following claims.

\textbf{Claim 1.} $\Gamma^{\prime}(R)$ contains no induced odd cycle of length at least 5.
  Assume to the contrary,
 $$a_1-a_2-\cdots-a_n-a_1$$
  is an induced odd cycle of length at least 5 in $\Gamma^{\prime}(R)$.

  By Lemma  \ref{lemma1d}, either $Ra_1\subseteq Ra_3$ or $Ra_3\subseteq Ra_1$. We show that these two cases lead to contradictions. First assume that the case $Ra_1\subseteq Ra_3$ happens. We continue the proof by proving the following subclaims.

  \textbf{Subclaim 1.} $Ra_1\subseteq Ra_i$, for every $3\leq i\leq n-1$.

     Clearly, $Ra_1\subseteq Ra_3$.  By Lemma  \ref{lemma1d}, $Ra_1\subseteq Ra_4$ or $Ra_4\subseteq Ra_1$. If $Ra_4\subseteq Ra_1$, then $Ra_4\subseteq Ra_3$, a contradiction, by Lemma  \ref{lemma1d}. So $Ra_1\subseteq Ra_4$. Again,  by Lemma  \ref{lemma1d},
   $Ra_1\subseteq Ra_5$ or $Ra_5\subseteq Ra_1$. If $Ra_5\subseteq Ra_1$, then since $Ra_1\subseteq Ra_4$,   $Ra_5\subseteq Ra_4$, a contradiction. Thus  $Ra_1\subseteq Ra_5$. Similarly,  $Ra_1\subseteq Ra_i$, for every $6\leq i\leq n-1$.

\textbf{Subclaim 2.} $Ra_2\subseteq Ra_i$, for every $4\leq i\leq n$. Obviously,
   $Ra_1\subseteq Ra_4$, by the Subclaim 1. By Lemma  \ref{lemma1d}, $Ra_2\subseteq Ra_4$ or $Ra_4\subseteq Ra_2$. If $Ra_4\subseteq Ra_2$, then  $Ra_1\subseteq Ra_2$, a contradiction. So $Ra_2\subseteq Ra_4$. Next, we show that $Ra_2\subseteq Ra_5$. If $Ra_5\subseteq Ra_2$, then since $Ra_2\subseteq Ra_4$, we deduce that $Ra_5\subseteq Ra_4$, a contradiction. Therefore $Ra_2\subseteq Ra_5$. Similarly, $Ra_2\subseteq Ra_i$,  for every $6\leq i\leq n$.

  Now, using Subclaims  1 and 2, we show that $Ra_3\subseteq Ra_1$.
  By Lemma \ref{lemma1d},  $Ra_3\subseteq Ra_5$ or $Ra_5\subseteq Ra_3$. If $Ra_5\subseteq Ra_3$, then by Subclaim 2, $Ra_2\subseteq Ra_3$, a contradiction. Thus $Ra_3\subseteq Ra_5$. We show that $Ra_3\subseteq Ra_6$. If $Ra_6\subseteq Ra_3$, then by Subcase 2, $Ra_2\subseteq Ra_3$, a contradiction. So $Ra_3\subseteq Ra_6$. Similarly,  $Ra_3\subseteq Ra_i$, for every $7\leq i\leq n$.  Since $Ra_1\subseteq Ra_3$,  $Ra_1\subseteq Ra_i$,  for every $5\leq i\leq n$, i.e.,   $Ra_1\subseteq Ra_n$, a contradiction.
    Thus $Ra_3\subseteq Ra_1$ and this contradicts Subclaim 1. Therefore, $\Gamma^{\prime}(R)$ contains no induced odd cycle of length at least 5.

  \textbf{Claim 2.}
$\overline{\Gamma^{\prime}(R)}$ contains no induced odd cycle of length at least 5.
    Assume to the contrary,
 $$a_1-a_2-\cdots-a_n-a_1$$
  is an induced odd cycle of length at least 5 in $\overline{\Gamma^{\prime}(R)}$.
  By Lemma  \ref{lemma1d},  we may assume that $Ra_1\subseteq Ra_2$. If $Ra_2\subseteq Ra_3$, then  $Ra_1\subseteq Ra_3$, a contradiction. Thus

  $$Ra_1\subseteq Ra_2,$$
  $$Ra_3\subseteq Ra_2.$$
If $Ra_4\subseteq Ra_3$, then $Ra_4\subseteq Ra_2$, a contradiction. Hence $Ra_3\subseteq Ra_4$. If $Ra_4\subseteq Ra_5$, then   $Ra_3\subseteq Ra_4$ implies that $Ra_3\subseteq Ra_5$, a contradiction. Thus

    $$Ra_3\subseteq Ra_4,$$
  $$Ra_5\subseteq Ra_4.$$
  Since $n$ is odd, by  continuing this procedure, we find

    $$Ra_{n-2}\subseteq Ra_{n-1},$$
  $$Ra_n\subseteq Ra_{n+1}=Ra_1.$$
  This implies that $Ra_n\subseteq Ra_1$ and since $Ra_1\subseteq Ra_2$, $Ra_n\subseteq Ra_2$, a contradiction.
  Therefore, $\overline{\Gamma^{\prime}(R)}$ contains no induced odd cycle of length at least 5.

 The proof now is complete.}
\end{proof}

\begin{remark}\label{remark2}
{Let $G$ be a graph and $x\in V(G)$. If there exists a vertex $y\in V(G)$  which is not adjacent to $x$ and $N(x)=N(y)$, then  $G$ is perfect if and only if $G\setminus \{x\}$ is perfect.
}
\end{remark}
We close this paper with the following result.
  \begin{thm}\label{perfectw}
    Let $R$ be a von Neumann regular ring and  $\omega(\Gamma^{\prime}(R))<\infty$. Then $\Gamma^{\prime}(R)$ is a  perfect graph.
\end{thm}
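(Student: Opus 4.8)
The plan is to reduce the general von Neumann regular case to the already-settled case $R=\mathbb{Z}_2\times\cdots\times\mathbb{Z}_2$ ($n$ times), exactly as was done for the clique/chromatic number in Theorem~\ref{perfect}. First I would invoke Lemma~\ref{color1} to write $R\cong F_1\times\cdots\times F_n$ with each $F_i$ a field and $n=|\mathrm{Min}(R)|<\infty$. The goal is then to show that the perfectness of $\Gamma^{\prime}(R)$ follows from the perfectness of $\Gamma^{\prime}(S)$, where $S=\mathbb{Z}_2\times\cdots\times\mathbb{Z}_2$ ($n$ times), which is already established in Theorem~\ref{perfect14}.

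The key observation is that perfectness, like clique and chromatic number, is invariant under deleting a vertex $x$ for which there is a non-adjacent twin $y$ with $N(x)=N(y)$; this is precisely the content of Remark~\ref{remark2} (the second one, stated for perfectness). So I would reuse the equivalence relation $\sim$ from the proof of Theorem~\ref{perfect}: set $x\sim y$ whenever $x_i=0\iff y_i=0$ for all $i$, let $A$ be the set of representatives with coordinates in $\{0,1\}$, and note that within each class $[x]$ any two elements $a,b$ satisfy $Ra=Rb$, hence $N(a)=N(b)$ by Lemma~\ref{lemma neighbour}, and $a,b$ are non-adjacent. Repeatedly applying Remark~\ref{remark2} to strip away all vertices of $V(\Gamma^{\prime}(R))\setminus A$ one at a time shows that $\Gamma^{\prime}(R)$ is perfect if and only if the induced subgraph $\Gamma^{\prime}(R)[A]$ is perfect.

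Finally I would identify $\Gamma^{\prime}(R)[A]$ with $\Gamma^{\prime}(S)$ via the graph isomorphism $\varphi$ constructed in Claim~2 of the proof of Theorem~\ref{perfect}, which sends a vertex of $S\setminus\{0,\mathbf 1\}$ to the unique element of $A$ with the same zero-pattern. Since perfectness is a graph-isomorphism invariant and $\Gamma^{\prime}(S)$ is perfect by Theorem~\ref{perfect14}, it follows that $\Gamma^{\prime}(R)[A]$ is perfect, and therefore so is $\Gamma^{\prime}(R)$.

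I expect the only real subtlety to be the repeated-deletion step: Remark~\ref{remark2} removes one vertex at a time, so I must check that the twin hypothesis is preserved after each deletion. This is fine because deleting $b$ does not disturb the relation $Ra=Rc$ among the remaining class members (their mutual non-adjacency and equal neighborhoods in the smaller graph are inherited from the larger graph, since $b$ was non-adjacent to them and shared their neighborhood). After finitely many steps — exactly $|V(\Gamma^{\prime}(R))\setminus A|$ of them, which is finite since $R$ is a finite product of fields with finite clique number forcing each $F_i$ to be finite — we arrive at $\Gamma^{\prime}(R)[A]\cong\Gamma^{\prime}(S)$, and the conclusion follows.
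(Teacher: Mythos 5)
Your overall strategy is exactly the paper's: reduce to the $\{0,1\}$-vertices $A$ via the twin relation $Ra=Rb$ (Lemma \ref{lemma neighbour} plus the perfectness version of Remark \ref{remark2}), identify $\Gamma^{\prime}(R)[A]$ with $\Gamma^{\prime}(S)$ for $S=\mathbb{Z}_2\times\cdots\times\mathbb{Z}_2$, and invoke Theorem \ref{perfect14}. However, the justification you give for the termination of the deletion process contains a genuine error: finiteness of $\omega(\Gamma^{\prime}(R))$ does \emph{not} force the fields $F_i$ to be finite. In $R=F_1\times\cdots\times F_n$ any two vertices with the same zero-pattern generate the same principal ideal and hence are non-adjacent, so every clique has at most one vertex per pattern and $\omega(\Gamma^{\prime}(R))\leq 2^n-2$ no matter how large the $F_i$ are; e.g.\ $R=\mathbb{Q}\times\mathbb{Q}$ has infinite vertex set but clique number $2$. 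Consequently $V(\Gamma^{\prime}(R))\setminus A$ may well be infinite, and ``apply Remark \ref{remark2} finitely many times'' does not go through as stated.

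The gap is repairable, and the paper's own proof indicates the repair: since Lemma \ref{Color1} reduces perfectness to the absence of induced odd cycles of length at least $5$ in $\Gamma^{\prime}(R)$ and in $\overline{\Gamma^{\prime}(R)}$, and since any two vertices $a,b$ with $Ra=Rb$ are non-adjacent with $N(a)=N(b)$ (so no induced cycle or complement-cycle of length at least $5$ can contain two such vertices), every putative offending induced subgraph has vertices with pairwise distinct principal ideals and can be transported, vertex by vertex, to its twin in $A$ without changing any adjacency. Thus an induced odd hole or antihole in $\Gamma^{\prime}(R)$ would yield one in $\Gamma^{\prime}(R)[A]\cong\Gamma^{\prime}(S)$, contradicting Theorem \ref{perfect14}. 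If you replace your iterated-deletion termination argument with this finite-subgraph observation, your proof becomes correct and coincides with the paper's.
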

\begin{proof}
 { Since $|\omega(\Gamma^{\prime}(R))|<\infty$, it follows from Lemma \ref{color1} that
  $R\cong F_1\times \cdots  \times F_n$, where $F_i$ is a field, for every $1\leq i\leq n<\infty$. Let
 $$A=\{(x_1,\dots,x_n)\in V(\Gamma^{\prime}(R))|\,\,\ x_i\in \{0,1\}\,\,\mathrm{for\,\, every}\,\,1\leq i\leq n\}.$$
 By Lemma \ref{lemma neighbour} and Remark \ref{remark2}, it is not hard to check that $\Gamma^{\prime}(R)$ is  perfect graph if and only if
  $\Gamma^{\prime}(R)[A]$ is perfect. In fact if
 $$a_1-a_2-\cdots-a_n-a_1$$
  is an induced odd cycle of length at least 5 in $\overline{\Gamma^{\prime}(R)}$ or $\Gamma^{\prime}(R)$, then $Ra_i\neq Ra_j$, for every
  $1\leq i,j\leq n$, $i\neq j$.
 By the proof of Theorem \ref{perfect}, we find that  $\Gamma^{\prime}(R)[A]\cong \Gamma^{\prime}(S)$, where $S=\mathbb{Z}_{2}\times \cdots \times \mathbb{Z}_{2}$ ($n$ times). Thus  $\Gamma^{\prime}(R)$ is perfect if and only if $\Gamma^{\prime}(S)$ is perfect. The result now follows from Lemma \ref{perfect14}.}
\end{proof}




{}

\end{document}